\font\smallit=cmti10
\font\smalltt=cmtt10
\renewcommand\section{\@startsection {section}{1}{\z@}
{-30pt \@plus -1ex \@minus -.2ex}
{2.3ex \@plus.2ex}
{\normalfont\normalsize\bfseries}}
\renewcommand\subsection{\@startsection{subsection}{2}{\z@}
{-3.25ex\@plus -1ex \@minus -.2ex}
{1.5ex \@plus .2ex}
{\normalfont\normalsize\bfseries}}
\renewcommand{\@seccntformat}[1]{\csname the#1\endcsname. }
\begin{document}

\begin{center}
\uppercase{\bf On the Sum of Reciprocals of Amicable Numbers}
\vskip 20pt
{\bf Jonathan Bayless}\\
{\smallit Department of Mathematics, Husson University, Bangor, Maine 04401, USA}\\
{\tt BaylessJ@husson.edu}\\
\vskip 10pt
{\bf Dominic Klyve}\\
{\smallit Department of Mathematics, Central Washington University, Ellensburg, WA 98926, USA}\\
{\tt klyved@cwu.edu}\\
\end{center}
\vskip 30pt

\centerline{\smallit Received: , Revised: , Accepted: , Published: } 
\vskip 30pt 

\newtheorem{thm}{Theorem}[section]
\newtheorem{lem}[thm]{Lemma}
\newtheorem{conj}[thm]{Conjecture}

\centerline{\bf Abstract}

\noindent
Two numbers $m$ and $n$ are considered amicable if the sum of their proper divisors, $s(n)$ and $s(m)$, satisfy $s(n) = m$ and $s(m) = n$.  In 1981, Pomerance showed that the sum of the reciprocals of all such numbers, $P$, is a constant.  We obtain both a lower and an upper bound on the value of $P$.

\pagestyle{myheadings}
\markright{\smalltt INTEGERS: 10 (2010)\hfill}
\thispagestyle{empty}
\baselineskip=12.875pt
\setcounter{page}{1} 
\vskip 30pt 

\section{Introduction} \label{sec0}

Since at least the time of the Ancient Greeks, amicable numbers have enjoyed the attention of mathematicians.  Let $s(n)$ denote the sum of the proper divisors of $n$, that is, $s(n) = \sigma(n) - n$, where $\sigma(n)$ denotes Euler's sum-of-divisors function.  Then an \emph{amicable pair} is a pair of distinct integers $m,n$, such that $s(n) = m$ and $s(m) = n$.  We will also refer to any integer which is a member of an amicable pair as an \emph{amicable number}.  The smallest amicable pair, $(220,284),$  was known to Pythagoras c.~500 BCE.  The study of amicable pairs was a topic arising often in Medieval Islam; as early as the 9th Century, Th\={a}bit had discovered three pairs, including (17296, 18416) -- a pair which was rediscovered independently by Borho, Ibn al-Bann\={a}', Kamaladdin F\={a}ris\={\i}, and Pierre de Fermat \cite{GPR}. In the 18th century, Euler \cite{E100} famously advanced the theory of amicable numbers by giving a table of 30 new pairs in 1747 (one of which is, in fact, an error -- see \cite{Sand}), and a larger table of 61 pairs, together with a method of generating them, in 1750 \cite{E152}.

Today, although much is known about amicable numbers (and their less popular friends, sociable numbers), there is still a lot that we don't know.  For example, although the best upper bound on their density was given by Pomerance \cite{Pom2} in 1981 (see \eqref{eq1} below), no known lower bound exists -- indeed, it has not been proven that there are infinitely many.  Considerable work has been done on questions such as the properties of even-even pairs, odd-odd pairs, pairs for which each number has exactly two prime factors not contained in its pair, and many more complex variations.  For a nice survey on amicable numbers, see \cite{GPR}.  One interesting fact, which motivated the present work, is that the sum of the reciprocals of the amicable numbers converges, that is,
\begin{equation} \label{eqP}
	\sum_{a \textrm{ amicable}} \frac{1}{a} = P < \infty.
\end{equation}

This is a consequence of the bound shown 28 years ago by Pomerance \cite{Pom2}, but to date no bounds have been given on the value of this sum.  In this paper, we provide an upper bound and a lower bound for the value of this sum.

\subsection{The distribution of amicable numbers}

Following \cite{Pom2}, we define $A(x)$ to be the count of amicable numbers not greater than $x$.  In 1955, Erd\H os \cite{Erdos} showed that the amicable numbers have density zero. Pomerance showed \cite{Pom1}
\begin{displaymath}
	A(x) \le x \exp\left( -c \sqrt{\log \log \log x \log \log \log \log x} \right)
\end{displaymath}
for a positive constant $c$ and all sufficiently large $x$.  Then, in 1981, Pomerance \cite{Pom2} improved this result to
\begin{equation} \label{eq1}
	A(x) \le C \frac{x}{\exp\left( \left( \log x \right)^{1/3} \right)}
\end{equation}
for some constant $C$ and all sufficiently large $x$.  From this and partial summation, it is clear that we have \eqref{eqP}.
 Since Pomerance was the first to show this sum to converge, we refer to its value, $P$, as \textit{Pomerance's constant}.

Having established that $P$ is finite, it is natural to ask about its value.  We prove the following theorem.
\begin{thm} \label{Thm1}
Pomerance's Constant, $P$, the sum of the the reciprocals of the amicable numbers, can be bounded as
$$
.0119841556 < P < 6.56 \times 10^8.
$$
\end{thm}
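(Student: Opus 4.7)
The plan is to split $P$ at a large cutoff $X$ as $P = S_X + T_X$ with $S_X = \sum_{a\le X,\,a\text{ amicable}} 1/a$ and $T_X = \sum_{a>X,\,a\text{ amicable}} 1/a$. The small part is handled by direct computation using modern tabulations of amicable pairs, and the tail is handled by partial summation against an effective version of Pomerance's inequality \eqref{eq1}. Both bounds in Theorem~\ref{Thm1} follow by choosing $X$ within the range of existing enumerations.

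For the lower bound, since every term in \eqref{eqP} is positive, $S_X$ is a valid lower bound for $P$ for any cutoff. I would use the lists of amicable pairs compiled by Pedersen, Garc\'{\i}a, and others, which are complete at least through $10^{14}$, and simply sum the reciprocals. Verifying that the resulting partial sum exceeds $0.0119841556$ is then purely a numerical check.

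For the upper bound, partial summation together with an effective form of \eqref{eq1}, valid for all $x\ge x_0$, yields
\begin{equation*}
T_X \;\le\; \int_X^\infty \frac{A(x)}{x^{2}}\,dx \;\le\; C\int_X^\infty \frac{dx}{x\exp((\log x)^{1/3})}.
\end{equation*}
The substitution $u=(\log x)^{1/3}$ converts the right-hand side to $3C\int_{(\log X)^{1/3}}^\infty u^{2}e^{-u}\,du = 3C\,e^{-(\log X)^{1/3}}\bigl((\log X)^{2/3}+2(\log X)^{1/3}+2\bigr)$, an explicit expression that can be made small by choosing $X$ large. For $S_X$ one writes $S_X \le A(X)/X + \int_{x_0}^{X} A(x)x^{-2}\,dx$ plus the exact contribution from amicables below $x_0$, again applying the effective bound on the unenumerated range.

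The principal obstacle is producing a fully explicit version of \eqref{eq1}: a specific admissible constant $C$ together with a specific threshold $x_0$ beyond which the inequality holds. Pomerance's argument depends on quantitative control of smooth numbers, of the local density of $\sigma(n)/n$, and of the structure of the factorizations appearing in $s(n)$, and each of these estimates must be tracked with explicit constants. Once this is done, one balances $S_X$ against $T_X$ by an appropriate choice of $X$; the fact that the final upper bound is as large as $6.56\times 10^{8}$ reflects how unforgiving such explicit constants are, since even a modest inflation of $C$ propagates dramatically through the tail integral.
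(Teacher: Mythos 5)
Your overall strategy---direct summation over the enumerated amicable pairs for the lower bound, and partial summation against an explicit form of Pomerance's bound for the tail---is the same as the paper's, but your two-piece decomposition $P = S_X + T_X$ with $X$ ``within the range of existing enumerations'' cannot be carried out as written, and the reason is quantitative. Any explicit version of \eqref{eq1} obtained by Pomerance's method comes with a threshold of validity astronomically larger than the enumeration limit of $10^{14}$: the paper's version holds only for $x \ge x_0 = \exp(10^6)$ (conditions such as \eqref{probeq} force this), and even then only with the weaker exponent $1/6$ in place of $1/3$, i.e.\ $A(x) \le 1.36 \times 10^7 \, x/\exp((\log x)^{1/6})$. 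So your tail integral $\int_X^\infty A(x)x^{-2}\,dx$ with $X \le 10^{14}$ passes through the enormous range $[10^{14}, \exp(10^6)]$ on which you have no usable bound for $A(x)$. A third piece is needed; the paper covers it with the trivial bound $\sum_{10^{14} < n \le \exp(10^6)} 1/n < 10^6$ taken over \emph{all} integers in that range. Your closed-form evaluation of the tail also assumes the exponent $1/3$ survives the explicitation; it does not, though the analogous computation with exponent $1/6$ still converges and is what produces the $6.5 \times 10^8$.

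The larger issue is that you have named, but not carried out, the substance of the proof: ``producing a fully explicit version of \eqref{eq1}'' is precisely what occupies Sections \ref{sec2} and \ref{sec4}---an explicit Brun--Titchmarsh bound for primes $q \equiv -1 \pmod p$, an explicit Rankin bound on $\psi(z,L^2)$, explicit Mertens-type estimates for $\sum_{p} p^{-c}$, and the verification that each of the five inadmissibility counts is at most $C_i x/\ell$ for concrete constants $C_i$. Without at least sketching how each of Pomerance's five conditions is made explicit and what constants emerge (the smooth-number count $C_1$ dominates everything else), the proposal establishes only the lower bound, which is indeed just the numerical check you describe and matches $P_{14}$ in Table \ref{SmallSumTable}.
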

It should be noted that the upper bound in this result can be improved by a more careful choice of the functions in Table \ref{FunctionsTable} on page \pageref{FunctionsTable}.  It is unclear how to fully optimize this argument, but the authors have been able to show $P < 3.4 \times 10^6$.

To establish Theorem \ref{Thm1}, we split the sum defining $P$ into three parts:
\begin{equation} \label{split}
	\sum_{\textrm{amicable } n} \frac{1}{n} = \mathop{\sum_{\textrm{amicable } n}}_{n \le 10^{14}} \frac{1}{n} + \mathop{\sum_{\textrm{amicable } n}}_{10^{14} < n \le \exp\left(10^6\right)} \frac{1}{n} + \mathop{\sum_{\textrm{amicable } n}}_{n > \exp\left(10^6\right)} \frac{1}{n}.
\end{equation}
We evaluate the first sum directly in section \ref{sec1} to establish the lower bound for $P$.  In section \ref{sec3}, we bound the second sum.  We then modify the argument in \cite{Pom2} to address the final sum above in section \ref{sec4}.

Throughout, $p$, $q$, and $r$ will denote primes, $P(n)$ the largest prime factor of $n$, and $\psi(x,y)$ the number of $y$-smooth numbers up to $x$, that is, the size of the set
\begin{displaymath}
	\{n \le x | P(n) \le y \}.
\end{displaymath}
Finally, $a$ and $b$ will represent amicable numbers.

We will repeatedly make use of the following functions.  These will be redefined at the appropriate place, but the reader can use this table as a helpful reference.

\begin{table}[htb]%
\begin{center}
\begin{tabular}{rl}
$x_0$ &= $\exp(10^6)$\\
$y_0$ &= $\exp(26000)$\\
$c$ &= $c(x) = 1 - \left(\log x\right)^{-1/6}\left(\log \log x\right)^{-1}$\\
$c_0$ &= $c(10^6) > 0.99276$ \\
$\sigma$ &= $\sigma(y) = 1 - 1/(2\log y)$\\
$\ell$ &= $\ell(x) = \exp\left( \left(\log x \right)^{1/6} \right)$\\
$L$ &= $L(x) = \exp\left( 0.1882 \left(\log x\right)^{2/3} \log \log x \right)$
\end{tabular}
\end{center}
\caption{A table of functions used in the proof of Theorem \ref{Thm1}.}
\label{FunctionsTable}
\end{table}

\section{A Lower Bound -- Some Reciprocal Sums} \label{sec1}

Determining a lower bound on $P$ is straightforward -- we need only sum the reciprocals of any subset of the set of amicable numbers to find one.
Let $P_j$ be the sum of all integers not greater than $10^j$ which are members of an amicable pair, i.e.,
$$
P_j = \sum_{\substack{a \le 10^j \\ a \text{ amicable}}} \frac{1}{a}.
$$

The current record for exhaustive searches for amicable numbers is $10^{14}$ \cite{APweb}. In Table \ref{SmallSumTable}, we give values for $P_j$ for various powers of 10 up to this bound.

\begin{table}%
\begin{center}
\begin{tabular}{r|l}
$j$ &	$P_{j}$ \\
\hline
$1$ &	$0$\\
$2$	& $0$\\
$3$	& $0.0080665813060179257$\\
$4$	& $0.0111577261442474466$\\
$5$	& $0.0117423756996823562$\\
$6$	& $0.0119304720866743157$\\
$7$ & $0.0119714208511438135$\\
$8$ & $0.0119812212551025145$\\
$9$	& $0.0119834313702743716$\\
$10$ & $0.0119839922963130553$\\
$11$ & $0.0119841199294457703$\\
$12$ & $0.0119841486963721084$\\
$13$ & $0.0119841542458770555$\\
$14$ & $0.0119841556796931142$\\
\end{tabular}
\end{center}
\caption{Sums of reciprocals of amicable numbers to $10^j$.}
\label{SmallSumTable}
\end{table}

From this table, we immediately have
\begin{equation} \label{P1}
P > P_{14} > .0119841556796931142.
\end{equation}

We note that this series seems to converge rather quickly.  Indeed, for any $j \le 13$ with $P_j > 0$, the difference between between $P_{14}$ and $P_{j}$ is less than the difference between $P_{j}$ and $P_{j-1}$.  With this observation, we conjecture that the true value of $P$ can be estimated by
$$
|P - P_{14}| < P_{14} - P_{13} \approx .0000000006338,
$$
and thus:
\begin{conj}
Pomerance's constant, $P$, satisfies $$P < .0119841563134.$$
\end{conj}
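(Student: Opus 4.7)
The conjecture asserts $P < 0.0119841563134$, which, combined with the computed lower bound $P_{14}$, is equivalent to the tail estimate $T := \sum_{a > 10^{14},\, a \text{ amicable}} 1/a \le P_{14} - P_{13} \approx 6.34 \times 10^{-10}$. The plan is to attempt a rigorous argument built around this tail bound, while acknowledging at the outset that on current techniques the target is out of reach by many orders of magnitude: even the sharpened bound $P < 3.4\times 10^6$ alluded to after Theorem \ref{Thm1} yields a tail estimate for $T$ that dwarfs what is required here.

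First I would decompose $T$ dyadically, writing $T = \sum_{j \ge 14} T_j$ with $T_j = \sum_{10^j < a \le 10^{j+1},\, a \text{ amicable}} 1/a$. The observation driving the conjecture — that $P_j - P_{j-1}$ decreases steeply with $j$ for each $j \le 13$ with $P_j > 0$ — suggests that one should try to establish $T_j / T_{j-1} \le \rho$ for some fixed $\rho < 1$ and all $j \ge 13$. If such a uniform ratio bound could be proved, then $\sum_{j \ge 14} T_j \le T_{13} \cdot \rho/(1-\rho)$, and the conjecture would follow from any $\rho \le 1/2$; the empirical ratios read off from Table \ref{SmallSumTable} are well below this threshold, so the inequality is at least morally plausible.

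The principal obstacle is producing any rigorous bound on $T_j/T_{j-1}$ of this quality. The inequality \eqref{eq1} controls $A(x)$ only up to unspecified constants and implicit slowly-varying factors, losses that are fatal when one wants to compare consecutive dyadic counts. A more refined approach would aim to exploit structural constraints specific to amicable numbers — the equation $s(s(n)) = n$ with $s(n) \ne n$ forces both members of a pair to have abundant small prime factors and a particular shape of $\sigma(n)/n$ — in the hope of establishing a sharp decay of the form $\sum_{a > x} 1/a \ll \exp\bigl(-c(\log x)^{\alpha}\bigr)$ with explicit constants and with $\alpha$ large enough that $x = 10^{14}$ already gives a usable value. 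Even optimistic versions of such a bound fall well short of $10^{-10}$ at this cutoff.

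Given this gap, the realistic path I would pursue toward the conjecture has two prongs: extend the exhaustive computational search well beyond $10^{14}$ to reinforce the empirical geometric decay of $T_j$ — each new decade directly shrinks the tail that remains to be bounded — and in parallel seek a conditional resolution under a stronger density hypothesis for amicable numbers, for instance $A(x) \ll x^{1-\delta}$ for some $\delta > 0$, which would immediately force the tail to decay geometrically at a rate that easily absorbs the required slack. Absent either a computational breakthrough of several decades or a substantially improved density bound, the statement must remain a conjecture whose justification rests on the clean monotone decay visible in Table \ref{SmallSumTable}.
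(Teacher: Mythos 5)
This statement is presented in the paper only as a conjecture, and the paper's sole justification is exactly the empirical observation you lead with: the decadal increments $P_j - P_{j-1}$ in Table \ref{SmallSumTable} decay so fast that $|P - P_{14}| < P_{14} - P_{13} \approx 6.3 \times 10^{-10}$ is plausible, though unproven. Your proposal reproduces this same heuristic (recast as a geometric ratio bound on the tail contributions) and correctly diagnoses that no rigorous argument is available, which matches the paper's explicit admission that ``this, however, is merely conjecture.''
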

This, however, is merely conjecture.  We turn now to establish the upper bound of Theorem \ref{Thm1}.

\section{The Middle Range} \label{sec3}

The amicable numbers in the range $[10^{14}, \exp(10^6)]$ are too large to be found explicitly, and too small to be amenable to the results we make use of later in this paper.  While there are some ways to restrict the sum over these numbers, their final contribution to Pomerance's constant is small, and we here simply make use of the trivial bound:
$$
\sum_{10^{14} < n \le \exp(10^6)} \frac{1}{n} < 10^6.
$$

\section{Preliminaries} \label{sec2}

We will use a number of explicit formulas of prime functions.  For instance, we will use the fact (see \cite[Theorem 5]{RS}) that
\begin{displaymath}
\sum_{p \le x} \frac{1}{p} \le \log \log x + B + \frac{1}{2 \left(\log x\right)^2}
\end{displaymath}
for $x \ge 286$, where $B = .26149721\ldots$.  Recall that we have chosen $x_0 = \exp(10^6)$.  This gives that, for $x \ge x_0$,
\begin{equation}
\label{eq3}
\sum_{p \le x} \frac{1}{p} \le \log \log x + .2615.
\end{equation}

We will use a bound on the sum of reciprocals of primes in a particular residue class modulo $p$, where $p > 10^{14}$.

\begin{lem} \label{myLem}
For any value of $y \ge 10^{14}$ and any prime $p \ge 10^{14}$, the following holds uniformly:
\begin{displaymath}
\mathop{\sum_{q \le y}}_{q \equiv -1 \kern-5pt\mod p} \frac{1}{q} \le \frac{4 + 3 \log \log y}{p}.
\end{displaymath}
\end{lem}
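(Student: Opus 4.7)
The strategy is to combine the Brun--Titchmarsh inequality with partial summation. Writing $\pi(t;p,a)$ for the number of primes $q \le t$ with $q \equiv a \pmod{p}$, the Montgomery--Vaughan form of Brun--Titchmarsh gives
$$
\pi(t;p,-1) \le \frac{2t}{(p-1)\log(t/p)} \quad \text{for } t > p.
$$
The apparent singularity at $t = p$ would be fatal in general, but it is harmless here: since $p \ge 10^{14}$ is an odd prime, the only integer in $[1, 2p-1)$ that is $\equiv -1 \pmod{p}$ is $p - 1$, which is even and greater than $2$, hence composite. Consequently the sum in the lemma is empty whenever $y < 2p - 1$, and I henceforth assume $y \ge 2p - 1$.

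Next I would apply Abel summation:
$$
\sum_{\substack{q \le y \\ q \equiv -1 \pmod{p}}} \frac{1}{q} = \frac{\pi(y;p,-1)}{y} + \int_{2p-1}^{y} \frac{\pi(t;p,-1)}{t^2}\, dt.
$$
Inserting the Brun--Titchmarsh bound and substituting $u = \log(t/p)$ converts the integral into
$$
\frac{2}{p-1} \int_{\log(2 - 1/p)}^{\log(y/p)} \frac{du}{u} = \frac{2}{p-1}\bigl(\log\log(y/p) - \log\log(2 - 1/p)\bigr),
$$
while the boundary term contributes at most $2/\bigl((p-1)\log(y/p)\bigr)$.

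To finish, I would use $p \ge 10^{14}$ and $y \ge 2p - 1$ to pin down the constants: these hypotheses give $\log(2 - 1/p) > 0.688$, whence $-\log\log(2 - 1/p) < 0.38$ and $1/\log(y/p) < 1.46$; and they also give $1/(p-1) \le (1 + 10^{-14})/p$. Adding the two contributions and using $\log\log(y/p) \le \log\log y$ produces an overall bound of roughly $(3.7 + 2\log\log y)/p$, which sits comfortably below $(4 + 3\log\log y)/p$ since $\log\log y \ge \log\log 10^{14} > 3$. The step I expect to require the most care is handling the blow-up of Brun--Titchmarsh as $t \downarrow p$; the parity observation on $p - 1$ circumvents it cleanly, leaving only the routine bookkeeping needed to confirm that the stated constants $4$ and $3$ absorb all of the slack.
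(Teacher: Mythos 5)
Your proposal is correct and follows essentially the same route as the paper: the Montgomery--Vaughan form of Brun--Titchmarsh combined with partial summation starting at $2p-1$ (justified by the observation that $p-1$ is composite), with the integral evaluated as $\log\log(t/p)$ and the constants checked against the worst case $y/p \approx 2$. The only cosmetic difference is that the paper replaces $2/(p-1)$ by $2.0001/p$ at the outset and groups the constants as $\frac{1}{\log(y/p)} - \log\log(2-\frac{1}{p}) < 1.8093$, whereas you carry $1/(p-1)$ to the end; the numerical bookkeeping in both cases lands comfortably under the stated $4 + 3\log\log y$.
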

\begin{proof}
We begin with the Brun-Titchmarsh inequality of \cite{IBT};  namely, for coprime integers $k$ and $n$, the number of primes $q \le y$ with $q \equiv k \kern-5pt\mod n$, denoted $\pi(y;n,k)$, satisfies
\begin{displaymath}
\pi(y;n,k) \le \frac{2y}{\varphi(n) \log (y/n)} ~\hbox{ for }~y>n.
\end{displaymath}
For prime $p \ge 10^{14}$ and $(k, p) = 1$, we can use the fact that $\varphi(p) = p-1$ to see that
\begin{equation} \label{BTIneq}
\pi(y; p, k) \le \frac{2 y}{\varphi(p) \log (y/p)} \le \frac{2.0001 y}{p \log (y/p)}.
\end{equation}
Note that the first prime $q \equiv -1 \kern-5pt\mod p$ is at least $2 p -1$, so we need only consider $p \le \frac{y+1}{2}$.  Thus, $\frac{y}{p} \ge 2 - \frac{1}{p} \ge 1.999999$ for $p \ge 10^{14}$.  We use partial summation to obtain
\begin{displaymath}
\begin{aligned}
\mathop{\sum_{q \le y}}_{q \equiv -1 \kern-5pt\mod p} \frac{1}{q} &= \frac{\pi(y; p, -1)}{y} + \int_{2p-1}^y \frac{\pi(t; p, -1)}{t^2} ~dt \\
&\le \frac{2.0001}{p \log (y/p)} + \int_{2p-1}^y \frac{2.0001}{p \, t \log (t/p)} ~ dt \\
&= \frac{2.0001}{p \log (y/p)} + \frac{2.0001}{p} \int_{2p-1}^y \frac{dt}{t\log (t/p)} \\
&= \frac{2.0001}{p \log (y/p)} + \frac{2.0001}{p} \cdot \log \log (t/p) \bigg|_{2p-1}^y \\
&= \frac{2.0001}{p}\left( \frac{1}{\log (y/p)} + \log \log (y/p) - \log \log \left(2 - \frac{1}{p} \right) \right).
\end{aligned}
\end{displaymath}
Since $\frac{1}{\log (y/p)} - \log \log \left(2 - \frac{1}{p} \right) < 1.8093$ and $\log \log (y/p) \le \log \log y$, we see that

\begin{displaymath}
\mathop{\sum_{q \le y}}_{q \equiv -1 \kern-5pt\mod p} \frac{1}{q} \le \frac{4 + 3 \log \log y}{p},
\end{displaymath}
which proves the lemma.
\end{proof}

We will need the following lemma to bound the number of amicable numbers up to $x$ which are also $y$-smooth for some $y \ge y_0 = \exp\left(26000\right)$.  Recall that $\sigma = 1 - 1/(2\log y)$.

\begin{lem}\label{lem_bigproduct}
For $y \ge y_0$ and $\sigma$ as above, we have
$$
\prod_{p \le y} \left( 1 - \frac{1}{p^\sigma}\right)^{-1} < 7.6515\log y.
$$
\end{lem}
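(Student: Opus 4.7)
My plan is to factor the product against the Mertens product, writing
$$
\prod_{p \le y}(1 - p^{-\sigma})^{-1} = \prod_{p \le y}(1 - 1/p)^{-1} \cdot \prod_{p \le y}\frac{1 - 1/p}{1 - p^{-\sigma}},
$$
and handling each piece separately. The first factor is controlled by the explicit Mertens bound $\prod_{p \le x}(1 - 1/p)^{-1} < e^\gamma \log x \bigl(1 + 1/(2\log^2 x)\bigr)$ from \cite{RS}, which for $y \ge y_0$ is essentially $e^\gamma \log y$. The second (correction) factor quantifies the inflation caused by the perturbation $\sigma < 1$, and I will show it contributes only a bounded multiplicative constant.

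For the correction, write each local factor as $1 + \delta_p$ with $\delta_p := (p^{-\sigma} - 1/p)/(1 - p^{-\sigma}) \ge 0$, so that $\log \prod (1 + \delta_p) \le \sum \delta_p$. Factoring $p^{-\sigma} - 1/p = p^{-1}(p^{1-\sigma} - 1)$, then applying the mean-value estimate $p^{1-\sigma} - 1 \le (1-\sigma)(\log p)\,p^{1-\sigma}$ together with $p^{1-\sigma} \le y^{1-\sigma} = \sqrt{e}$ for $p \le y$ and $1 - \sigma = 1/(2\log y)$, yields
$$
p^{-\sigma} - 1/p \le \frac{\sqrt{e}\,\log p}{2 p \log y}.
$$
The remaining denominator $1/(1-p^{-\sigma})$ is uniformly bounded: for $p \ge 3$ it is at most $1/(1 - 3^{-\sigma}) < 3/2 + o(1)$, and the $p = 2$ contribution to $\sum \delta_p$ is of size $O(1/\log y)$ because the numerator $2^{1-\sigma}-1$ is itself that small.

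Summing over $p \ge 3$ and invoking the Rosser-Schoenfeld estimate $\sum_{p \le x}(\log p)/p < \log x + o(1)$ gives $\sum_{p \le y}\delta_p \le 3\sqrt{e}/4 + o(1)$. Exponentiating and multiplying by the Mertens bound,
$$
\prod_{p \le y}(1 - p^{-\sigma})^{-1} < e^{\gamma + 3\sqrt{e}/4}\bigl(1 + o(1)\bigr)\log y,
$$
and since $e^{\gamma + 3\sqrt{e}/4} \approx 1.781 \cdot 3.446 \approx 6.14$, this sits comfortably below $7.6515\,\log y$ once the vanishing error terms are verified at $y = y_0 = \exp(26000)$. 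The main obstacle is bookkeeping: one has to track the several explicit Rosser-Schoenfeld constants at the threshold $y_0$ (where they are all easily negligible) and treat $p = 2$ separately, since $2^{-\sigma}$ is only $O(1/\log y)$ away from $1/2$, so the denominator $1 - 2^{-\sigma}$ is not bounded away from $1$ — but the numerator there is equally small, so the $p = 2$ contribution evaporates.
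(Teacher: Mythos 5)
Your proof is correct, but it takes a different decomposition from the paper's. The paper writes $\prod_{p\le y}(1-p^{-\sigma})^{-1} \le \exp\bigl(\sum_{p\le y} p^{-\sigma} + \sum_p 1/(p^\sigma(p^\sigma-1))\bigr)$, bounds $\sum_{p\le y} p^{-\sigma}$ by comparison with $\sum_{p\le y} 1/p$ via the same mean-value-theorem device you use (getting $\log\log y + 1.0859$), and then bounds the second-order sum $\sum_p 1/(p^\sigma(p^\sigma-1)) < 0.7734$ by an explicit numerical computation over all primes up to $500000$ plus an integral tail; exponentiating gives $6.4193\log y$. You instead factor the product against the explicit Rosser--Schoenfeld form of Mertens' third theorem and bound the ratio $\prod_{p\le y}\frac{1-1/p}{1-p^{-\sigma}}$ by $\exp\bigl(\sum\delta_p\bigr)$, controlling $\delta_p$ with the same MVT estimate and $\sum_{p\le y}(\log p)/p \le \log y + E$. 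Your route buys a cleaner argument that needs no prime-by-prime numerical summation (only the tabulated constants $e^\gamma$ and $E$), and it lands at roughly $6.14\log y$, slightly sharper than the paper's $6.4193\log y$; the paper's route keeps everything inside a single exponential and reuses its Lemmas 4.3--4.5 elsewhere (notably Lemma 4.3 in its Lemma 4.7). To make your sketch fully explicit you would only need to pin down the three $o(1)$ terms numerically at $y_0=\exp(26000)$ (the Mertens error $1/(2\log^2 y_0)$, the excess of $1/(1-3^{-\sigma})$ over $3/2$, and $\delta_2$), all of which are comfortably absorbed by the gap between $6.14$ and $7.6515$.
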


Before proving Lemma \ref{lem_bigproduct}, we establish a few necessary lemmas.

\begin{lem}\label{lem_logpoverp}
For $y \ge y_0$,
$$
\sum_{p \le y} \frac{\log p}{p} \le \log y - 1.3325.
$$
\end{lem}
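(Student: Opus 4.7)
The plan is to recognize Lemma 4.3 as an effective form of the classical identity
$$
\sum_{p \le y}\frac{\log p}{p} = \log y - E + o(1),
$$
where $E = 1.3325822\ldots$ is the Meissel--Mertens-type constant attached to this weighted prime sum. The strategy is to cite an explicit version of this asymptotic, obtained by Rosser and Schoenfeld in the same paper already invoked for \eqref{eq3}, of the shape
$$
\left|\sum_{p \le y}\frac{\log p}{p} - \log y + E\right| \le \frac{c_1}{\log y}
$$
for an explicit constant $c_1$ and all $y$ above a small threshold (one may take something like $c_1 = 1/2$ from \cite{RS}). The inequality we want will then follow by rearranging and checking that the quoted error term is smaller than the numerical gap between $E$ and the round figure $1.3325$.

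Concretely, the plan reduces the lemma to verifying the inequality $c_1/\log y \le E - 1.3325$ for $y \ge y_0$. Since $E > 1.332582$, the right-hand side exceeds $8.2 \times 10^{-5}$, while the hypothesis $y \ge y_0 = \exp(26000)$ forces $\log y \ge 26000$, so the left-hand side is at most $c_1/26000$, which is comfortably smaller. Combining these two bounds with the Rosser--Schoenfeld estimate gives
$$
\sum_{p \le y}\frac{\log p}{p} \le \log y - E + \frac{c_1}{\log y} \le \log y - 1.3325,
$$
as claimed.

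The only real task is locating the correct explicit form of Mertens' theorem in \cite{RS} and doing the arithmetic; there is no deeper obstacle. The reason this step works so cleanly is precisely the very large choice of $y_0 = \exp(26000)$: this choice trivializes the error in Mertens' formula, so that the lemma reduces to a direct numerical comparison between the Meissel--Mertens constant and the target $1.3325$. One should note that a sharper constant than $1.3325$ (closer to $E$ itself) could be obtained in the same way at essentially no cost; the choice $1.3325$ is presumably made to keep constants clean in the later application of Lemma~\ref{lem_bigproduct}.
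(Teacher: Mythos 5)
Your proposal is correct and matches the paper's proof: the authors cite exactly the Rosser--Schoenfeld explicit form of Mertens' theorem (their (3.23)), $\sum_{p \le y} (\log p)/p \le \log y + E + 1/(2\log y)$ with $E < -1.332582275$, and absorb the $1/(2\log y) \le 1/52000$ error using $y \ge y_0 = \exp(26000)$, just as you describe (up to your opposite sign convention for $E$). No differences worth noting.
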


\begin{proof}
We recall a result from Rosser and Schoenfeld (see \cite[3.23]{RS}).
Let $E = - \gamma - \sum_p (\log p)/p^n < -1.332582275$.  For $y \ge 319$,
$$
\sum_{p \le y} \frac{\log p}{p} \le \log y + E + \frac{1}{2 \log y}.
$$
Setting $y = y_0$, Lemma \ref{lem_logpoverp} follows.
\end{proof}

\begin{lem}\label{lem_1overpsigma}
Let $y \ge y_0$.  Then, for $\sigma$ as above,
$$
\sum_{p \le y} \frac{1}{p^\sigma} < \log \log y + 1.0859.
$$
\end{lem}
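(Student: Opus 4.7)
The plan is to reduce $\sum_{p \le y} p^{-\sigma}$ to the two sums that have already been estimated in the paper, namely $\sum_{p \le y} 1/p$ (via the Rosser--Schoenfeld bound cited as \eqref{eq3}) and $\sum_{p \le y} (\log p)/p$ (via Lemma \ref{lem_logpoverp}). The bridge is a one-variable convexity estimate for the exponential.

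First I would rewrite
$$
\frac{1}{p^\sigma} \;=\; \frac{1}{p}\cdot p^{1/(2\log y)} \;=\; \frac{1}{p}\exp\!\left(\frac{\log p}{2\log y}\right).
$$
For every prime $p \le y$, the argument $t := (\log p)/(2\log y)$ lies in $[0,1/2]$. Since $\exp$ is convex on $[0,1/2]$, it lies below its secant, so
$$
e^{t} \;\le\; 1 + 2\bigl(e^{1/2}-1\bigr)\,t \qquad (0\le t\le 1/2).
$$
Substituting $t=(\log p)/(2\log y)$ yields
$$
\frac{1}{p^\sigma} \;\le\; \frac{1}{p} + \frac{(e^{1/2}-1)\log p}{p\log y}.
$$

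Now I would sum over primes $p\le y$ and invoke the two known bounds. For $y\ge y_0=\exp(26000) \ge 286$, Rosser--Schoenfeld gives $\sum_{p\le y} 1/p \le \log\log y + 0.26149722 + 1/(2(\log y)^2)$, which for our range is comfortably $\le \log\log y + 0.2615$. Lemma \ref{lem_logpoverp} gives $\sum_{p\le y}(\log p)/p \le \log y - 1.3325$. Combining,
$$
\sum_{p\le y}\frac{1}{p^\sigma} \;\le\; \log\log y + 0.2615 + (e^{1/2}-1)\!\left(1 - \frac{1.3325}{\log y}\right) \;\le\; \log\log y + 0.2615 + (e^{1/2}-1).
$$
Since $e^{1/2}-1 < 0.6488$, this gives $\sum_{p\le y} p^{-\sigma} < \log\log y + 0.9103$, which is well within the asserted bound $\log\log y + 1.0859$ and in fact leaves substantial slack.

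There is no real obstacle here: the proof is a two-line convexity estimate followed by plugging in the two previous bounds. The only minor care needed is in justifying the secant inequality (equivalently, noting $e^t\le 1+2(e^{1/2}-1)t$ on $[0,1/2]$ by convexity of $\exp$) and in confirming the chosen numerical constant $0.26149722$ from Rosser--Schoenfeld applies at $y_0=\exp(26000)$ rather than at $x_0=\exp(10^6)$, which it does since the hypothesis $x\ge 286$ is easily met. The slack in the final estimate shows that no finer expansion of $e^t$ (for instance a Taylor remainder) is needed.
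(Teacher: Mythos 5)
Your proof is correct and follows essentially the same route as the paper's: both bound $\sum_{p\le y}(p^{-\sigma}-p^{-1})$ by a constant times $(\log y)^{-1}\sum_{p\le y}(\log p)/p$ and then invoke Lemma \ref{lem_logpoverp} together with the Rosser--Schoenfeld bound \eqref{eq3}. The only difference is that you control the pointwise gap via the secant inequality $e^t\le 1+2(e^{1/2}-1)t$ on $[0,1/2]$ rather than the paper's mean value theorem argument combined with $p^\sigma\ge pe^{-1/2}$, which replaces the paper's factor $e^{1/2}/2\approx 0.8244$ by $e^{1/2}-1\approx 0.6488$ and so yields the slightly sharper constant $0.9103$ in place of $1.0859$.
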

\begin{proof}
We first note the helpful fact that
\begin{equation}\label{psigmaineq}
p^{\sigma} = p \exp(- \log p/(2 \log y)) \ge p \exp\left(-1/2\right).
\end{equation}

The idea in this lemma is to bound the difference between $1/p$ and $1/p^{\sigma}$. Let $f(y) = p^{-y}$; then we want to estimate $f(\sigma) - f(1)$. Since $f'(y) = -(\log p) f(y)$, it follows from the mean value theorem that
$f(\sigma)-f(1) = (1-\sigma) (\log p)/p^{y}$
for some $y$ between $\sigma$ and 1. Since $\sigma = 1-1/(2 \log y)$, this gives
\begin{align*}
f(\sigma)-f(1) & \le \frac{1}{2 \log y} \left( \frac{\log p}{p^{\sigma}} \right)\\
& \le \frac{e^{1/2}}{ 2\log y} \left( \frac{\log p}{p} \right),
\end{align*}
by \eqref{psigmaineq}.

Hence, by Lemma \ref{lem_logpoverp}, we have
$$
\sum_{p \le y} \left(\frac{1}{p^{\sigma}} - \frac{1}{p}\right) \le \frac{e^{1/2}}{2\log y} \sum_{p \le y} \frac{ \log p}{p} \le \frac{e^{1/2}}{2} \left( \frac{\log y - 1.3325}{\log y} \right) < 0.8244.
$$
So this means that
$$
\sum_{p \le y} \frac{1}{p^{\sigma}} \le 1 + \sum_{p \le y} \frac{1}{p} \le \log \log y + 1.0859,
$$
by \eqref{eq3}.
\end{proof}

\begin{lem}\label{lem_zeta2} For $y \ge y_0$ and $\sigma$ as above, we have
\begin{equation}\label{zeta2like}
\sum_{p}\frac{1}{p^{\sigma}(p^{\sigma}-1)} < .7734.
\end{equation}
\end{lem}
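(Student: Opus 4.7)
The plan is to reduce to a single numerical case by monotonicity and then compare to the classical convergent prime sum $\sum_p 1/(p(p-1))$, which is within about $2 \times 10^{-4}$ of the target. For the monotonicity step, I would observe that for each prime $p \ge 2$ the map $\sigma \mapsto p^\sigma(p^\sigma - 1) = p^{2\sigma} - p^\sigma$ has derivative $\log p \cdot p^\sigma(2p^\sigma - 1) > 0$, hence is strictly increasing in $\sigma$. Since $\sigma(y)$ is itself increasing in $y$, for every $y \ge y_0$
\begin{displaymath}
\sum_p \frac{1}{p^\sigma(p^\sigma - 1)} \le \sum_p \frac{1}{p^{\sigma_0}(p^{\sigma_0} - 1)}, \qquad \sigma_0 := \sigma(y_0) = 1 - \frac{1}{52000},
\end{displaymath}
which eliminates the dependence on $y$ and reduces the claim to a single numerical inequality.

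I would then split
\begin{displaymath}
\sum_p \frac{1}{p^{\sigma_0}(p^{\sigma_0} - 1)} = \sum_p \frac{1}{p(p-1)} + \sum_p \frac{(p - p^{\sigma_0})(p + p^{\sigma_0} - 1)}{p(p-1) \cdot p^{\sigma_0}(p^{\sigma_0} - 1)}
\end{displaymath}
and handle the two sums separately. The first equals $\sum_{k \ge 2} P(k)$, where $P$ denotes the prime zeta function, and can be bounded above by $0.77316$ by summing $P(k)$ explicitly for $k \le 10$ and using the tail estimate $\sum_{k > 10}\bigl(\zeta(k) - 1\bigr) = \sum_{n \ge 2} 1/\bigl(n^{10}(n-1)\bigr) < 10^{-3}$. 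For the correction, the inequality $p - p^{\sigma_0} = p(1 - e^{-\log p/52000}) \le p \log p / 52000$ (valid for $p \le y_0$) makes each summand at most a constant multiple of $(\log p)/(52000\, p^2)$; summing against $\sum_p (\log p)/p^2$, which is easily bounded by a small constant via partial summation from Lemma \ref{lem_logpoverp}, yields a total correction of order $10^{-5}$. The primes $p > y_0$ contribute a negligible tail since $\sum_{p > y_0} 1/p^2 < 1/y_0 = e^{-26000}$.

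The main obstacle is the tightness of the required margin: the gap between $\sum_p 1/(p(p-1))$ and $0.7734$ is only about $2 \times 10^{-4}$, so all numerical estimates must be carried out carefully and no multiplicative constant in the correction bound can be allowed to balloon. Nevertheless, a correction of size $\sim 10^{-5}$ leaves roughly an order of magnitude of slack, making the calculation robust.
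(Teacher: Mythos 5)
Your plan is correct, but it takes a genuinely different route from the paper. The paper's proof is a direct numerical attack: it splits the sum at $5\times 10^5$, reports $\sum_{p \le 500000} 1/(p^{\sigma}(p^{\sigma}-1)) \le .7733545$ by explicit computation, and bounds the tail by the integral $\int_{500000}^{\infty} dt/(t^{\sigma}(t^{\sigma}-1)) \le 10^{-7}$; the uniformity in $y$ (i.e., that it suffices to compute at the worst-case $\sigma$) is left implicit. You instead make that monotonicity step explicit, reduce to $\sigma_0 = 1 - 1/52000$, and then treat the sum as a perturbation of the classical constant $\sum_p 1/(p(p-1)) = \sum_{k\ge 2} P(k) \approx 0.7731567$, bounding the correction analytically by a quantity of order $10^{-5}$. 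What your approach buys is verifiability without a large prime computation (only the tabulated prime zeta values $P(2),\dots,P(10)$ and a short analytic estimate are needed) and an explicit treatment of the $\sigma$-dependence; what the paper's approach buys is bluntness and a larger effective margin. Two cautions on your execution. First, your main-term bound is razor thin: $\sum_{k\le 10}P(k) \approx 0.7721716$ and the tail $\sum_{k>10}P(k) \le \sum_{n\ge 2} 1/(n^{10}(n-1)) \approx 0.0009854$, so you must use that sharper tail value rather than the round $10^{-3}$ you quote, or else extend to $k \le 11$, to land under $0.77316$. Second, in the correction term the crude lower bound $p^{\sigma_0} \ge p e^{-1/2}$ (the natural consequence of $p \le y_0$) is too lossy if applied to all primes at once --- it inflates the constant enough to threaten the available slack of $2.4\times 10^{-4}$; you need to use $p^{\sigma_0} \ge p(1 - \log p/52000)$, which is essentially $p$ for small $p$, reserving the $e^{-1/2}$ bound for the range where the terms are already negligible. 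With those details attended to, the argument closes with total $\lesssim 0.7732 < 0.7734$.
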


\begin{proof}
Since $y \ge y_0$, it follows that $\sigma > .9999$.  We will bound \eqref{zeta2like} by explicit computation and analytic methods.  Using 500000 as a useful place to split our computation, we write
\begin{displaymath}
\begin{aligned}
\sum_{p}\frac{1}{p^{\sigma}(p^{\sigma}-1)} & \le \sum_{p \le 500000}\frac{1}{p^{\sigma}(p^{\sigma}-1)} + \sum_{500000 < p < \infty }\frac{1}{p^{\sigma}(p^{\sigma}-1)}\\
& \le .7733545 + \int_{500000}^\infty  \frac{1}{t^{\sigma}(t^{\sigma}-1)} ~ \textrm{dt}\\
& \le .7733545 + .0000001\\
& \le .7734,
\end{aligned}
\end{displaymath}
by explicit computation.
\end{proof}

We may now prove Lemma \ref{lem_bigproduct}.
\begin{proof}
\begin{displaymath}
\begin{aligned}
\prod_{p \le y} \left( \frac{1}{ 1-1/p^{\sigma} } \right) & \le  \exp\left(\sum_{p \le y}\left( \frac{1}{p^{\sigma}} + \frac{1}{p^{2 \sigma}} + \dots\right) \right) \nonumber \\
& = \exp\left(\sum_{p \le y} \frac{1}{p^\sigma} + \frac{1}{p^{\sigma}(p^{\sigma}-1)}\right) \nonumber \\
& =  \exp\left(\sum_{p \le y} \frac{1}{p^\sigma}\right) \cdot \exp \left(\sum_{p \le y}\frac{1}{p^{\sigma}(p^{\sigma}-1)}\right) \nonumber \\
& \le  \exp\left( \log \log y + 1.0859 \right) \exp(.7734) \label{1overpsig} \nonumber \\
& \le 6.4193 \log y,
\end{aligned}
\end{displaymath}
by Lemmas \ref{lem_zeta2} and \ref{lem_1overpsigma}.\end{proof}

We now show that our choice of $x \ge x_0$ is ``sufficiently large'' to adapt Pomerance's argument \cite{Pom2}.  Let
\begin{displaymath}
c = 1 - \left(\log x\right)^{-1/6}\left(\log \log x\right)^{-1}.
\end{displaymath}
For $x \ge x_0$, it is clear that
\begin{displaymath}
c \ge c_0 = 1 - \left(10^6\right)^{-1/6} \left(\log 10^6\right)^{-1} > .99276.
\end{displaymath}

\begin{lem} \label{lem1}
For any $c \ge c_0$ above, we have
\begin{displaymath}
	\sum_{k \ge 2} \frac{1}{k^c \left(k^c - 1\right)} \le 1.0225.
\end{displaymath}
\end{lem}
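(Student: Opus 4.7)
The strategy is to reduce to $c = c_0$ via monotonicity, then follow the template of Lemma \ref{lem_zeta2}: split the sum at a convenient cutoff, evaluate the initial segment numerically, and control the tail with an elementary integral comparison.

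\textbf{Step 1: Monotonicity in $c$.} For each fixed $k \ge 2$, the map $c \mapsto k^c$ is strictly increasing on $(0,\infty)$, and $k^c > 1$ throughout the range $c \ge c_0$; hence $c \mapsto k^c(k^c - 1)$ is strictly increasing in $c$, and therefore $c \mapsto \frac{1}{k^c(k^c-1)}$ is strictly decreasing. Summing term by term, the series $\sum_{k \ge 2} \frac{1}{k^c(k^c-1)}$ is a decreasing function of $c$ on $[c_0, \infty)$, so it suffices to establish the bound at the single value $c = c_0$.

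\textbf{Step 2: Split at a cutoff.} Following the pattern of Lemma \ref{lem_zeta2}, pick a convenient cutoff $N$ (for instance $N = 10^5$ or $10^6$) and write
\begin{displaymath}
\sum_{k \ge 2} \frac{1}{k^{c_0}(k^{c_0}-1)} = \sum_{k=2}^{N} \frac{1}{k^{c_0}(k^{c_0}-1)} + \sum_{k > N} \frac{1}{k^{c_0}(k^{c_0}-1)}.
\end{displaymath}
The initial finite sum is computed by direct numerical evaluation. As a sanity check, observe that the $c = 1$ series telescopes to $\sum_{k \ge 2} \frac{1}{k(k-1)} = 1$, and since $c_0 = 0.99276$ is extremely close to $1$, each term differs from its $c = 1$ counterpart only in the third decimal place or later; the initial sum should therefore evaluate to something just slightly above $1$.

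\textbf{Step 3: Tail estimate.} For $k > N$, use $k^{c_0} - 1 \ge (1 - N^{-c_0})\, k^{c_0}$ to get
\begin{displaymath}
\sum_{k > N} \frac{1}{k^{c_0}(k^{c_0}-1)} \le \frac{1}{1 - N^{-c_0}} \sum_{k > N} \frac{1}{k^{2 c_0}} \le \frac{1}{1 - N^{-c_0}} \int_{N}^{\infty} \frac{dt}{t^{2 c_0}} = \frac{1}{1 - N^{-c_0}} \cdot \frac{N^{1 - 2 c_0}}{2 c_0 - 1}.
\end{displaymath}
Since $c_0 > 0.99276 > \tfrac12$, the denominator $2 c_0 - 1 > 0.985$ is harmless, and the tail is of size roughly $N^{-(2 c_0 - 1)}$; choosing $N$ as above makes the tail negligible compared to the budget $0.0225$ left over from Step~2.

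\textbf{Main obstacle.} The only real work is the numerical evaluation in Step~2: one must compute the partial sum to enough significant digits to verify that the initial segment plus the tail bound is at most $1.0225$. This is routine but tedious. The structural content — monotonicity in $c$ and the integral comparison for the tail — is straightforward, and the proof closely parallels Lemma \ref{lem_zeta2}.
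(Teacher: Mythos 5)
Your proposal is correct and follows essentially the same route as the paper: the paper also splits the sum at $10^6$, evaluates the initial segment numerically (obtaining $\le 1.02247315$), and bounds the tail via $k^c < 1.000002\,(k^c-1)$ together with the integral comparison $\int_{10^6-1}^{\infty} t^{-2c}\,dt \le 0.0000013$. The only difference is that you make the monotonicity reduction to $c = c_0$ explicit, which the paper leaves implicit.
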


\begin{proof}
First, we split the sum into two parts, i.e.,
\begin{displaymath}
	\sum_{k \ge 2} \frac{1}{k^c \left(k^c - 1\right)} = \sum_{2 \le k < 10^6} \frac{1}{k^c \left(k^c - 1\right)} + \sum_{k \ge 10^6} \frac{1}{k^c \left(k^c - 1\right)}.
\end{displaymath}
For any value of $k \ge 10^6$, we have $k^c < 1.000002(k^c-1)$, so the second sum can be bounded by
\begin{displaymath}
	  \sum_{k \ge 10^6} \frac{1}{k^c \left(k^c - 1\right)} \le 1.000002 \sum_{k \ge 10^6} \frac{1}{k^{2c}} \le 1.000002 \int_{10^6-1}^{\infty} \frac{dt}{t^{2c}} \le .0000013.
\end{displaymath}
A quick computation shows that
\begin{displaymath}
	\sum_{2 \le k < 10^6} \frac{1}{k^c \left(k^c - 1\right)} \le 1.02247315,
\end{displaymath}
and adding these two together completes the proof of the lemma.
\end{proof}

\begin{lem} \label{lem2}
For any $c \ge c_0$, we have
\begin{displaymath}
	\sum_{p \ge 2} \frac{1}{p^c \left(p^c - 1\right)} \le .7877.
\end{displaymath}
\end{lem}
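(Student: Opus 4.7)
The plan is to mirror the structure of Lemma \ref{lem1}, restricting to primes. Since each summand $1/(p^c(p^c-1))$ is a decreasing function of $c$ (the derivative of $p^c(p^c-1)$ with respect to $c$ is $(\log p)\,p^c(2p^c-1)>0$), it suffices to establish the bound at the endpoint $c=c_0$. Thus I would fix $c = c_0 > 0.99276$ throughout.

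First I would split the sum at $10^6$:
$$\sum_{p \ge 2} \frac{1}{p^c(p^c-1)} = \sum_{p < 10^6} \frac{1}{p^c(p^c-1)} + \sum_{p \ge 10^6} \frac{1}{p^c(p^c-1)}.$$
For the tail, the restriction to primes only makes the sum smaller than in Lemma \ref{lem1}, so by the same argument (using $k^c < 1.000002(k^c-1)$ for $k \ge 10^6$) it is bounded by $1.000002 \sum_{k \ge 10^6} k^{-2c_0} \le 0.0000013$, which is negligible.

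The main part of the work is a direct explicit computation of $\sum_{p < 10^6, \, p \text{ prime}} 1/(p^{c_0}(p^{c_0}-1))$. This is a finite prime sum over the primes below $10^6$ (about $78498$ primes), readily evaluated numerically. At $c=1$ this sum converges to approximately $0.7731$ (consistent with Lemma \ref{lem_zeta2}), and at $c = c_0 \approx 0.99276$ the value is only slightly larger; it will comfortably fall below $0.7877 - 0.0000013$.

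The only real obstacle is making sure that the computation is rigorous: I would use interval arithmetic (or monotonicity in $c$ with a safely rounded evaluation at $c_0$) so that each term $1/(p^{c_0}(p^{c_0}-1))$ is bounded above by a computed rational, and then sum with rounding-up to obtain an honest upper bound. Combining the finite-part bound with the negligible tail estimate yields the claimed inequality $\sum_{p \ge 2} 1/(p^c(p^c-1)) \le 0.7877$.
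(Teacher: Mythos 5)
Your proposal matches the paper's proof essentially exactly: split the sum at $10^6$, evaluate the prime sum below $10^6$ by explicit computation (the paper gets $0.7876817684$), and bound the tail by the same $1.000002\sum_{k\ge 10^6}k^{-2c}\le 0.0000013$ argument as in Lemma \ref{lem1}. Your added remark on monotonicity in $c$, justifying the reduction to $c=c_0$, is a small point the paper leaves implicit but is correct and welcome.
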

\begin{proof}
As before, we split the sum into two parts, i.e.,
\begin{displaymath}
	\sum_{p \ge 2} \frac{1}{p^c \left(p^c - 1\right)} = \sum_{2 \le p < 10^6} \frac{1}{p^c \left(p^c - 1\right)} + \sum_{p \ge 10^6} \frac{1}{p^c \left(p^c - 1\right)}.
\end{displaymath}
Once again, we use an explicit computation to show that
\begin{displaymath}
	\sum_{2 \le p \le 10^6} \frac{1}{p^c \left(p^c - 1\right)} \le .7876817684,
\end{displaymath}
and the second sum can be bounded by
\begin{displaymath}
	  \sum_{p \ge 10^6} \frac{1}{p^c \left(p^c - 1\right)} \le .0000013
\end{displaymath}
as in the previous lemma.  Adding these two together completes the proof of the lemma.
\end{proof}

In section \ref{sec4}, we will use the function $\ell = \exp\left( \left(\log x \right)^{1/6} \right)$, which is referenced in the following lemma.

\begin{lem} \label{lem3}
For any $x \ge x_0$ and $c = 1 - \left( \log \log x \right)^{-1} \left(\log x\right)^{-1/6}$, we have
\begin{displaymath}
	\sum_{p \le \ell^4 } \frac{1}{p^c} \le \frac{1}{6} \log \log x + 2.0346.
\end{displaymath}
\end{lem}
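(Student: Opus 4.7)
The proof will mirror the strategy used for Lemma \ref{lem_1overpsigma}: write $1/p^c = 1/p + (1/p^c - 1/p)$, apply the explicit bound \eqref{eq3} to the first piece, and control the difference using an upper bound on $\sum_{p\le \ell^4} (\log p)/p$ à la Lemma \ref{lem_logpoverp}.

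The plan is first to note that $\log \ell^4 = 4(\log x)^{1/6}$, so $\log\log(\ell^4) = \log 4 + \tfrac16 \log\log x$. Applying \eqref{eq3} at $y = \ell^4$ (which exceeds $286$ since $x \ge x_0$) gives
$$
\sum_{p \le \ell^4} \frac{1}{p} \le \log\log(\ell^4) + .2615 = \tfrac16 \log\log x + \log 4 + .2615 \le \tfrac16 \log\log x + 1.6478.
$$
The main term will come from here; the remaining $2.0346 - 1.6478 \approx .3868$ budget must be absorbed by the difference $\sum_{p \le \ell^4}(1/p^c - 1/p)$.

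For the difference, I would write
$$
\frac{1}{p^c} - \frac{1}{p} = \frac{p^{1-c} - 1}{p} = \frac{e^{(1-c)\log p} - 1}{p},
$$
and use the elementary inequality $e^y - 1 \le y e^y$ for $y \ge 0$ with $y = (1-c)\log p$. For $p \le \ell^4$ one has $(1-c)\log p \le 4(1-c)(\log x)^{1/6} = 4/\log\log x$, which since $x \ge x_0$ is bounded by $4/\log\log x_0 \approx .2896$, so $p^{1-c} \le e^{.2896} \le 1.3358$. Therefore
$$
\frac{1}{p^c} - \frac{1}{p} \le 1.3358 \,(1-c)\,\frac{\log p}{p}.
$$

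Then the Rosser--Schoenfeld bound used in Lemma \ref{lem_logpoverp} (with $E < -1.332$) gives
$$
\sum_{p \le \ell^4} \frac{\log p}{p} \le \log(\ell^4) + E + \frac{1}{2\log(\ell^4)} \le 4(\log x)^{1/6},
$$
so combining,
$$
\sum_{p \le \ell^4}\left(\frac{1}{p^c} - \frac{1}{p}\right) \le 1.3358 \cdot \frac{1}{\log\log x \cdot (\log x)^{1/6}} \cdot 4(\log x)^{1/6} = \frac{5.3432}{\log\log x} \le \frac{5.3432}{\log 10^6} < .3868.
$$
Adding this to the earlier bound yields the claim. There is no real obstacle here — the only subtlety is that the numerics have to line up, so the final $2.0346$ is essentially a tight sum of the $.2615 + \log 4$ from \eqref{eq3} and the $\approx .3868$ from the difference estimate. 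If these were to fall short, the levers available are a sharper inequality than $e^y - 1 \le y e^y$ (e.g.\ $e^y - 1 \le y + y^2 e^y/2$) or using the negative value of the Rosser--Schoenfeld constant $E$ in the $\log p/p$ sum, but as shown above the straightforward version suffices.
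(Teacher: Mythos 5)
Your proposal is correct and follows essentially the same route as the paper: the paper also splits $1/p^c = 1/p + (1/p^c-1/p)$, bounds the difference by $1.3358\,(1-c)(\log p)/p$ (via the mean value theorem plus $p^c \ge .7486\,p$, which is the same estimate as your $e^y-1\le ye^y$ with $p^{1-c}\le 1.3358$), and then applies the Rosser--Schoenfeld bounds to get $.3868$ and $\tfrac16\log\log x + \log 4 + .2615$. The numerics line up exactly as you computed.
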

\begin{proof}
First, note that
\begin{displaymath}
\begin{aligned}
	p^c = p^{1 - \left( \log \log x \right)^{-1}\left(\log x\right)^{-1/6}} &= p \exp\left( - \frac{\log p}{\left(\log x\right)^{1/6}\left( \log \log x \right)} \right) \\
	&\ge p \exp\left( - \frac{4\left(\log x\right)^{1/6}}{\left(\log x\right)^{1/6}\left( \log \log x \right)} \right) \\
	&= p \exp\left( - \frac{4}{\left( \log \log x \right)} \right) \\
	&\ge .7486p,
	\end{aligned}
\end{displaymath}
since $p \le \ell^4$ and $x \ge e^{10^6}$.  Then, note that the mean value theorem says that, with $f(y) = p^{-y}$ and $f'(y) = -(\log p)/p^y$,
\begin{displaymath}
	f(c) - f(1) \le \frac{(1-c)(\log p)}{p^c}.
\end{displaymath}
Applying both of these facts, we see that
\begin{displaymath}
	\sum_{p \le \ell^4} \left( \frac{1}{p^c} - \frac{1}{p} \right) \le 1.3358 (1-c) \sum_{p \le \ell^4} \frac{\log p}{p}.
\end{displaymath}
By Lemma \ref{lem_logpoverp}, this can be bounded by
\begin{displaymath}
\begin{aligned}
	(1-c) 1.3358 \left( \log \ell^4 - 1.3325\right) &= \frac{ 1.3358 \left(4\left(\log x\right)^{1/6} - 1.3325 \right)}{\left( \log x \right)^{1/6} \left( \log \log x \right)} \\
	&\le \frac{5.3432}{\log \log x} \\
	&\le .3868.
	\end{aligned}
\end{displaymath}
Then, we have
\begin{displaymath}
\begin{aligned}
	\sum_{p \le \ell^4} \frac{1}{p^c} &= \sum_{p \le \ell^4} \frac{1}{p} + \sum_{p \le \ell^4} \left( \frac{1}{p^c} - \frac{1}{p} \right) \\
	&\le \log \log \ell^4 + .3868 + .2615 \\
	&\le \frac{1}{6} \log \log x + 2.0346.
	\end{aligned}
\end{displaymath}
by \eqref{eq3}.
\end{proof}

In \cite{Pom2}, Pomerance uses a function $N(s)$ which, for
\begin{displaymath}
	\psi(m) = \prod_{q^a || m} \left(q+1\right)q^{a-1},
\end{displaymath} 
is the number of $m$ with $\psi(m) \le s$ and $P(\psi(m))$ bounded by a somewhat small function of $x$.  His bound applies only to ``large $x$'', so we must verify that our choice of $x \ge x_0$ is large enough.  Pomerance showed that
\begin{displaymath}
	N(s) \le 2s^c \mathop{\prod_{P(k) < \ell^4}}_{k > 1} \left( 1 - k^{-c} \right)^{-1}.
\end{displaymath}
We now follow \cite{Pom2}, making the bounds explicit as we proceed.

\begin{equation}
\mathop{\prod_{P(k) < \ell^4}}_{k > 1} \left( 1 - k^{-c} \right)^{-1} = \mathop{\prod_{P(k) < \ell^4}}_{k > 1} \left( 1 + \frac{1}{k^c} + \frac{1}{k^{2c}} + \cdots \right).
\label{eqnw}
\end{equation}
Since $1 + t \le e^t$, we have (for $t = \frac{1}{k^c} + \frac{1}{k^{2c}}+ \cdots$)
\begin{displaymath}
	1 + \frac{1}{k^c} + \frac{1}{k^{2c}} + \cdots \le \exp\left( \frac{1}{k^c} + \frac{1}{k^{2c}} + \cdots \right).
\end{displaymath}
Putting this into \eqref{eqnw} gives
\begin{equation} \label{eqnw1}
	\mathop{\prod_{P(k) < \ell^4}}_{k > 1} \left( 1 - k^{-c} \right)^{-1} \le \exp\left( \mathop{\sum_{P(k) < \ell^4}}_{k > 1} k^{-c} + \mathop{\sum_{P(k) < \ell^4}}_{k > 1} \left( k^{-2c} + k^{-3c} + \cdots \right) \right).
\end{equation}
We may bound
\begin{displaymath}
	\mathop{\sum_{P(k) < \ell^4}}_{k > 1} \left( k^{-2c} + k^{-3c} + \cdots \right) \le \sum_{k > 1} \frac{1}{k^c\left(k^c-1\right)} \le 1.0225,
\end{displaymath}
by Lemma \ref{lem1}.  Setting aside this secondary term, the remaining term in the exponential in \eqref{eqnw1} satisfies
\begin{displaymath}
\mathop{\sum_{P(k) < \ell^4}}_{k > 1} k^{-c} = \prod_{p < \ell^4} \left(1 - p^{-c}\right)^{-1}.
\end{displaymath}
Bounding this product by the same argument as in \eqref{eqnw} and \eqref{eqnw1}, we have
\begin{displaymath}
\begin{aligned}
\prod_{p < \ell^4} \left(1 - p^{-c}\right)^{-1}	&\le \exp\left( \sum_{p < \ell^4} p^{-c} + \sum_{p < \ell^4} \frac{1}{p^c(p^c-1)} \right) \\
&\le \exp\left( .7877 \right) \exp\left( \sum_{p < \ell^4} p^{-c} \right),
\end{aligned}
\end{displaymath}
by Lemma \ref{lem2}. An application of Lemma \ref{lem3} shows that this product is bounded by
\begin{displaymath}
	\prod_{p < \ell^4} \left(1 - p^{-c}\right)^{-1}	\le \exp\left( \frac{1}{6} \log \log x + 2.8223 \right) \le \exp(2.8223) \left(\log x\right)^{1/6}.
\end{displaymath}
Thus,
\begin{displaymath}
	\mathop{\prod_{P(k) < \ell^4}}_{k > 1} \left( 1 - k^{-c} \right)^{-1} \le \exp\left( 1.0225 + \exp(2.8223) \left(\log x\right)^{1/6} \right).
\end{displaymath}
We will need the bound in \eqref{eqnw} to hold for
\begin{displaymath}
	s \ge L = \exp\left( 0.1882 \left(\log x\right)^{2/3} \log \log x \right).
\end{displaymath}
If $s \ge L$, the argument above shows that the inequality
\begin{displaymath}
	\begin{aligned}
	\frac{N(s)}{s} &\le 2s^{c-1} \exp\left( 1.0225 + \exp(2.8223) \left(\log x\right)^{1/6} \right) \\
	&= 2 \exp\left( (c-1) \log s + 1.0225 + \exp(2.8223) \left(\log x\right)^{1/6} \right) \\
	&\le 2 \exp\left( -.1882 \left(\log x\right)^{1/2} + 1.0225 + \exp(2.8223) \left(\log x\right)^{1/6} \right) \\
	&\le \frac{2e^{1.0225}}{\ell^2}
	\end{aligned}
\end{displaymath}
holds for $x$ such that
\begin{equation} \label{probeq}
	2 + \exp(2.8223) \le .1882 \left( \log x \right)^{1/3},
\end{equation}
which, in turn, holds true for $x \ge x_0$.

Lastly, we will make use of the fact that for $k \ge 2$,
\begin{equation} \label{eqextra}
	\sum_{n \ge k} \frac{1}{n^2} < \frac{1}{k-1}.
\end{equation}

\section{Large Amicable Numbers} \label{sec4}

\subsection{Outline of the Proof} \label{subsec1}

Pomerance's proof of \eqref{eq1} rests on an argument that five different types of numbers do not contribute much to the sum defining $P$, with a careful count of amicable numbers among the remaining integers up to $x$.  For the sake of brevity, we include only the statement of each assumption, and the resulting bound which must be made explicit to bound the value of $P$.  We make a similar argument here, though we bound $A(x)$ by a much smaller function so that our bound will apply for smaller $x$.  Following the notation of \cite{Pom2}, we will use our previously defined functions
\begin{equation} \label{Choices}
	\ell = \exp\left( \left( \log x \right)^{1/6} \right) \,\, \textrm{and} \,\, L = \exp\left( .1882 \left( \log x \right)^{2/3} \log \log x \right).
\end{equation}
Note that $\ell$ is smaller and $L$ is slightly larger than their equivalents in \cite{Pom2}.  It will also be useful to have $z = 2 x \log \log x$.

Call an integer $n$ \textit{admissible} if it satisfies each of the five conditions given in \cite{Pom2}.  We summarize them here.
\begin{enumerate}
	\item[(i)] The largest prime factor of $n$ and $s(n)$ are both at least $L^2$.
	
	\item[(ii)] If $k^s$ divides $n$ or $s(n)$ with $s \ge 2$, then $k^s < \ell^3$.
	
	\item[(iii)] If $p \mid \gcd\left(n, \sigma(n)\right)$, then $p < \ell^4$.
	
	\item[(iv)] The integer $n$ satisfies
	\begin{displaymath}
	\frac{n}{P(n)} \ge L \qquad \textrm{ and } \qquad \frac{s(n)}{P\left(s(n)\right)} \ge L.
\end{displaymath}
	
	\item[(v)] If $m = \frac{n}{P(n)}$ and $m'=\frac{s(n)}{P\left(s(n)\right)}$, then we have $P\left(\sigma(m)\right) \ge \ell^4$ and $P\left(\sigma(m')\right) \ge \ell^4$.
\end{enumerate}
In showing \eqref{eq1}, Pomerance proved the following (see \cite{Pom2} for details).

\begin{thm}[Pomerance]
The set of integers $n \le x$ which are not admissible is $\displaystyle O\left( x/\exp\left(\left(\log x\right)^{1/3}\right) \right)$.  Specifically, the number of integers $n \le x$ failing conditions (i)-(v) above are bounded by the functions
\begin{enumerate}
	\item[(i)] $\displaystyle \psi\left( z, L^2 \right)$,

\item[(ii)] $\displaystyle z \sum_{k^s \ge \ell^3} k^{-s}$,

\item[(iii)] $\displaystyle \sum_{p \ge \ell^4} \mathop{\sum_{q \equiv -1 (p)}}_{q \le x/p} \frac{x}{pq}$,

\item[(iv)] $\displaystyle \frac{4x \log \log x}{L}$, and

\item[(v)] $\displaystyle \frac{4e^{1.0225}x \log \log x\left(1 + 2\log \log x\right)}{\ell^2} \sum_{p \le x/L} \frac{1}{p}$,
\end{enumerate}
respectively.
\end{thm}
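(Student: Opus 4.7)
The plan is to establish each of the five bounds by a union bound over the corresponding failure mode, using throughout the elementary fact that $s(n) < \sigma(n) \le z = 2x \log\log x$ for every $n \le x$, which converts statements about $s(n)$ into statements about integers in $[1,z]$. This symmetry between $n$ and $s(n)$ will be the recurring structural feature of the proof.

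For (i), the condition $P(n) < L^2$ places $n$ among the $L^2$-smooth integers below $x$, giving $\psi(x, L^2) \le \psi(z, L^2)$; symmetrically, $P(s(n)) < L^2$ forces the value $s(n) \le z$ to be one of the $\psi(z, L^2)$ smooth integers in $[1,z]$. For (ii), any $n$ (or $s(n)$) divisible by a prime power $k^s \ge \ell^3$ with $s \ge 2$ is counted, after a union bound over such $(k,s)$, by the geometric sum $z \sum_{k^s \ge \ell^3,\, s \ge 2} k^{-s}$. For (iii), if a prime $p \ge \ell^4$ divides both $n$ and $\sigma(n)$, then --- after discarding the situations already covered by (ii) --- one is forced into the case that some prime $q \equiv -1 \pmod{p}$ divides $n$, giving the double sum $\sum_{p \ge \ell^4} \sum_{q \le x/p,\, q \equiv -1\,(p)} x/(pq)$, with Lemma \ref{myLem} controlling the inner sum. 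For (iv) one writes $n = P(n) \cdot m$ with $m = n/P(n) < L$ and sums over $m$, pairing with the symmetric treatment of $s(n)$ to produce the $4 x \log\log x / L$ bound. For (v), one combines this same factorization with the additional requirement that $\sigma(m)$ be $\ell^4$-smooth, invoking Pomerance's function $N(s)$ and its explicit bound $N(s)/s \le 2 e^{1.0225}/\ell^2$ developed in Section \ref{sec2}, together with the reciprocal prime sum \eqref{eq3} applied to the primes $p \le x/L$.

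The main obstacle I expect is bound (v), since it must chain together several of the explicit estimates --- the $N(s)/s$ bound, Lemmas \ref{lem1}--\ref{lem3}, and a reciprocal prime sum --- while simultaneously folding in the symmetric $s(n)$ contribution, and the accumulated constants have to be tracked carefully to land on exactly $4 e^{1.0225} x \log\log x (1 + 2\log\log x)/\ell^2 \cdot \sum_{p \le x/L} 1/p$. The first three bounds are essentially routine union bounds on smooth integers, prime-power multiples, and primes in residue classes; the delicate work happens in (iv) and (v), where condition (i)'s guarantee $P(n) \ge L^2$ is essential for breaking the counting problem into tractable pieces.
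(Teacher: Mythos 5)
First, note that the paper itself does not prove this theorem: it is quoted from Pomerance with the explicit pointer ``see \cite{Pom2} for details,'' so there is no in-paper argument to compare yours against, and a genuine proof here would amount to reproducing \cite{Pom2}. Measured against that task, your outline has the right shape for (i)--(iii) and (v): smooth-number counts for (i), a union bound over prime powers for (ii), the reduction to $p \mid \sigma(q) = q+1$ for (iii) once the cases with $q^e \| n$, $e \ge 2$ are discarded via (ii), and, for (v), the correct move of summing $N(x/p)$ over the possible largest prime factors $p \le x/L$. One caveat you should make explicit throughout: each bound must also cover the twin condition on $s(n)$, and transferring a count on $s(n) \le z$ back to a count on $n$ uses that $s$ is an involution on amicable numbers ($n = s(s(n))$); the theorem is really a statement about amicable $n$, not arbitrary integers, despite the paper's loose phrasing.

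That observation is not cosmetic, and it is exactly where your argument for (iv) breaks. The set of \emph{all} integers $n \le x$ with $n/P(n) < L$ is enormous: writing $n = mP$ and ``summing over $m$,'' as you propose, gives $\sum_{m < L} \pi(x/m) \asymp x (\log L)/\log x$, and indeed every prime $p \le x$ already fails (iv), so the number of integers failing (iv) is at least $\pi(x) \sim x/\log x$ --- vastly larger than $4x\log\log x/L$, and larger even than the total error term $O\bigl(x/\exp((\log x)^{1/3})\bigr)$. The stated bound is therefore only attainable for amicable $n$, and Pomerance's proof exploits amicability structurally: from $a = mP$ with $P = P(a)$ one gets $\sigma(a) = \sigma(m)(P+1) = \sigma(b)$ and $b = s(a) = s(m)P + \sigma(m)$ with $a = s(b)$, and it is these rigid relations tying $m$, $P$, and the partner $b$ together (not a union bound over $m$) that produce the crucial factor $1/L$. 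Your sketch contains no trace of this idea, so as written condition (iv) --- and with it the theorem --- is not established.
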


Unfortunately, in \cite{Pom2}, the bounds in (v) are only valid for very large $x$.  By choosing the values of $\ell$ and $L$ in \eqref{Choices}, which are much smaller and slightly larger, respectively, than their counterparts in \cite{Pom2}, we can make the argument apply for all $x \ge x_0 = \exp\left(10^6\right)$.  In making these bounds explicit, we bound the quantity in (i) by the function $C_1 x/\ell$, the function in (ii) by $C_2 x/\ell$, and so on.

Let $S$ denote the set of all admissible integers.  Pomerance demonstrated that the count of amicable numbers in $S$ can be bounded above by
\begin{equation} \label{eq2}
	\sum_{r \ge \ell^4} \mathop{\sum_{q \equiv -1(r)}}_{q \le x} \mathop{\sum_{m \equiv 0 (q)}}_{m \le x} \mathop{\sum_{q' \equiv -1 (r)}}_{q' \le z} \frac{2x \log \log x}{q'm} \le \frac{C_6x}{\ell}
\end{equation}
for some constant $C_6$ (here $(n)$ should be taken to mean $\textrm{mod } n$).

Then, it is clear that we may take $C = C_1 + C_2 + \cdots + C_6$ in the bound
\begin{equation}
A(x) \le C \frac{x}{\exp\left( \left(\log x \right)^{1/6}\right)}
\label{eq:updatedA(x)}
\end{equation}
for $x \ge x_0$, and so it remains to find explicit values for each of these constants.

\subsection{Evaluating the constants}

We address each of the constants given in subsection \ref{subsec1}.

\begin{enumerate}

\item[$C_1$:]

We use an explicit version of the method of Rankin to bound $\psi(z,L^2)$.  Given that $x \ge x_0$, we also have that $L^2 \ge y_0$.  Rankin's method is based on the observation that, for any $\sigma > 0, x \ge 1,$ and $y \ge 2$,
\begin{equation}\label{Rankin1}
\begin{aligned}
\psi(x,y) &\le \sum_{\substack{n \ge 1 \\ P(n) \le q}} \left(\frac{x}{n}\right)^\sigma = x^\sigma \sum_{\substack{n \ge 1 \\ P(n) \le q}} \frac{1}{n^\sigma} \\
&= x^\sigma \prod_{p \le y} \left( 1 - \frac{1}{p^\sigma}\right)^{-1}.
\end{aligned}
\end{equation}

From Lemma \ref{lem_bigproduct} is follows that
$$
\psi(x,y) \le 6.4193 \left(x^\sigma\right) (\log y),
$$
and, therefore, for $\sigma = 1 - 1/(2 \log y)$, that
\begin{displaymath}
\begin{aligned}
\psi(z,L^2) &\le 6.4193 \cdot z^\sigma  \cdot .3764 \left(\log x\right)^{2/3} \log \log x \\
&\le 2.4163 \cdot x^{\sigma} \cdot 2^{\sigma} \left(\log \log x\right)^{1+\sigma} \cdot \left(\log x\right)^{2/3}.
\end{aligned}
\end{displaymath}

Thus, we want to find $C_1$ such that
\begin{displaymath}
	\frac{4.8325 \cdot x \cdot \left(\log \log x\right)^2 \cdot \left(\log x\right)^{2/3}}{z^{1/(.7528 \left(\log x\right)^{2/3} \log \log x) }} \le C_1 \frac{x}{\ell}.
\end{displaymath}
Plugging in $x_0$, we may take $C_1 = 13553617.97$.\\

It turns out that the other constants are negligible compared to $C_1$, so we will sacrifice some sharpness in bounds for space in what follows.\\

\item[$C_2$:] Note that we can bound
\begin{displaymath}
\begin{aligned}
	z \mathop{\sum_{k^m \ge \ell^3}}_{m \ge 2} \frac{1}{k^m} &< z \mathop{\sum_{k \ge \ell^{\frac{3}{2}}}}\left(\frac{1}{k^2} + \frac{1}{k^3} + \cdots \right) = z \mathop{\sum_{k \ge \ell^{\frac{3}{2}}}}\frac{1}{k^2 - k} \\
	&= z \mathop{\sum_{k \ge \ell^{\frac{3}{2}}}} \frac{1}{k(k-1)} < 1.0001z \mathop{\sum_{k \ge \ell^{\frac{3}{2}}}}\frac{1}{k^2} < \frac{1.0001z}{\ell^{\frac{3}{2}}-1}
	\end{aligned}
	\end{displaymath}
	for $x \ge x_0$.   Now we need
\begin{displaymath}
\frac{1.0001z}{\ell^{\frac{3}{2}}-1} < C_2 \frac{x}{\ell}.
\end{displaymath}
Plugging in $x = x_0$ into the formula for $\ell$ shows that we may choose $C_2 = .1862$.\\

\item[$C_3$:] A simple application of Lemma \ref{myLem} gives
\begin{displaymath}
	\frac{x}{p} \mathop{\sum_{q \equiv -1 (p)}}_{q \le x/p} \frac{1}{q} \le \frac{x}{p} \left( \frac{4 + 3\log \log \left(x/p\right)}{p} \right).
\end{displaymath}
Now, summing this over $p \ge \ell^4$ gives at most
\begin{displaymath}
	 x(4 + 3\log \log x) \sum_{p \ge \ell^4} \frac{1}{p^2} < \frac{x(4 + 3\log \log x)}{\ell^4 - 1},
\end{displaymath}
by \eqref{eqextra}.  Therefore, we need
\begin{displaymath}
	\frac{x(4 + 3\log \log x)}{\ell^4 - 1} \le \frac{C_3x}{\ell},
\end{displaymath}
which holds for $C_3 > 4.3 \times 10^{-12}$.\\

\item[$C_4$:] Here, we need
\begin{displaymath}
	\frac{4 x \log \log x}{L^2} < \frac{C_4x}{\ell}
\end{displaymath}
or
\begin{displaymath}
\frac{4 \log \log x \exp\left( \left( \log x \right)^{1/6} \right)}{\exp\left( .61\left( \log x \right)^{2/3} \log \log x \right)}	< C_4.
\end{displaymath}
Plugging in $x = x_0$ gives that we may choose any $C_4 > 10^{-17471}$.\\

\item[$C_5$:] We must next compute $C_5$ such that the inequality
\begin{displaymath}
	\frac{4e^{1.0225}x \log \log x\left(1+2\log\log x\right)}{\ell^2} \sum_{p \le x/\ell} \frac{1}{p} \le \frac{C_5x}{\ell}
\end{displaymath}
holds for all $x \ge x_0$.  By \eqref{eq3}, this requires
\begin{displaymath}
	\frac{4e^{1.0225} \log \log x \left(1+2\log\log x\right)}{\exp\left( \left( \log x \right)^{1/6} \right)} \left( \log \log x + .2615 \right) < C_5.
\end{displaymath}
For $x \ge x_0$, we may take $C_5 = 2.8117$.\\

\item[$C_6$:] We have
\begin{equation} \label{eq5}
	\sum_{r \ge \ell^4} \mathop{\sum_{q \equiv -1(r)}}_{q \le x} \mathop{\sum_{m \equiv 0 (q)}}_{m \le x} \mathop{\sum_{q' \equiv -1 (r)}}_{q' \le z} \frac{2x \log \log x}{q'm} = z \sum_r \sum_q \sum_m \frac{1}{m} \sum_{q'} \frac{1}{q'}.
	\end{equation}
	Applying Lemma \ref{myLem}, we find \eqref{eq5} is bounded by
\begin{equation} \label{eq7}
z (4 + 3\log \log z)  \sum_r \sum_q \sum_m \frac{1}{m}.
\end{equation}
Now, since $\log (x/q) < \log x$, we can bound
\begin{displaymath}
	\mathop{\sum_{m \equiv 0 (q)}}_{m \le x} \frac{1}{m} = \sum_{n \le x/q} \frac{1}{nq} = \frac{1}{q} \sum_{n \le x/q} \frac{1}{n} \le \frac{\log x}{q}.
\end{displaymath}
Using this and another application of Lemma \ref{myLem}, the iterated sum in \eqref{eq7} above can be bounded by
\begin{equation} \label{eq9}
z (4 + 3\log \log z) (\log x) (4 + 3\log \log x) \sum_r \frac{1}{r^2}.
\end{equation}
Lastly, \eqref{eqextra} shows that
	\begin{displaymath}
	\sum_{r \ge \ell^4} \frac{1}{r^2} < \frac{1}{\ell^4 - 1},
\end{displaymath}
and so, putting this into \eqref{eq9}, we need
	\begin{equation} \label{eq11}
\frac{z (4 + 3\log \log z) (\log x) (4 + 3\log \log x)}{\ell^4 - 1} \le C_6 \frac{x}{\ell}.
\end{equation}
A calculation shows that \eqref{eq11} holds for $C_6 = .0054$.
\end{enumerate}

Thus, for $x \ge \exp\left(10^6\right)$, the number of amicable numbers not greater than $x$, $A(x)$, can be bounded as
\begin{displaymath}
A(x) \le 13553620.97 \frac{x}{\exp\left( \left(\log x\right)^{1/6}\right)}.
\end{displaymath}
Then, by partial summation, we have that the third sum in \eqref{split} is bounded by
\begin{displaymath}
	13553620.97 \left( \exp\left(-10\right)+1 \right) \int_{x_0}^{\infty} \frac{dt}{t \exp\left( \left( \log t \right)^{1/6}\right)} \le 654666169.
\end{displaymath}
Combined with the results of sections \ref{sec1} and \ref{sec3}, this proves Theorem \ref{Thm1}.\\

\section{Acknowledgements}

The authors would like to thank Carl Pomerance for many helpful suggestions, as well as for his expert guidance over the years. Thanks to Paul Pollack for his help in making some of the bounds in \cite{Pom2} explicit in Section \ref{sec2}, and Kevin Ford for a number of helpful ideas.  We also thank an anonymous referee for many suggested improvements in the writing of this paper.

\end{document}